\documentclass[twoside,11pt]{amsart}

\usepackage{a4wide}
\usepackage{latexsym}
\usepackage{mathrsfs}
\usepackage[T1]{fontenc}
\usepackage{enumitem}
\usepackage{mathrsfs}

\usepackage{amssymb}
\usepackage{latexsym}
\usepackage{amsmath}
\usepackage{fancyhdr}
\usepackage{bbm}
\usepackage{setspace}
\usepackage{mathabx}

\numberwithin{equation}{section}

\newtheorem{theorem}{Theorem}[section] 
\newtheorem{lemma}[theorem]{Lemma}     
\newtheorem{corollary}[theorem]{Corollary}
\newtheorem{proposition}[theorem]{Proposition}
\theoremstyle{definition}

\newtheorem{definition}[theorem]{Definition}

\newcommand {\C}{\mathbb C}

\newcommand {\N}{\mathbb N}

\newcommand {\K}{\mathscr K}
\newcommand {\B}{\mathscr B}

\newcommand{\spn}{{\rm span \,}}

\newcommand{\im}{\operatorname{im}}

\title[Subspaces as Kernels of Bounded Operators]{Subspaces that can and cannot be the kernel of a bounded operator on a Banach space}

\begin{document}

\author[N.\ J.\  Laustsen]{Niels Jakob Laustsen}
\author[J.\ T.\  White]{Jared T.\ White}
\address{
Niels Jakob Laustsen\\
Department of  Mathematics and Statistics\\
Lancaster University\\
Lancaster LA1 4YF\\
United Kingdom.}
\email{n.laustsen@lancaster.ac.uk}
\address{
Jared T. White\\
Laboratoire de Math{\'e}matiques de Besan{\c c}on\\
Universit{\'e} de Franche-Comt{\'e}\\
16 Route de Gray\\
25030 Besan{\c c}on\\
France.}
\email{jared.white@univ-fcomte.fr, jw65537@gmail.com}

\date{2018}

\subjclass[2010]{Primary: 46H10, 47L10; secondary: 16P40, 46B26, 47A05, 47L45.}
\keywords{Banach space, bounded operator, kernel, dual Banach algebra, weak*-closed ideal, Noetherian}

\begin{abstract}
Given a Banach space $E$, we ask which closed subspaces may be realised as the kernel of a bounded operator $E \rightarrow E$. We prove some positive results which imply in particular that when $E$ is separable every closed subspace is a kernel. Moreover, we show that there exists a Banach space $E$ which contains a closed subspace that cannot be realised as the kernel of any bounded operator on $E$. This implies that the Banach algebra $\mathcal{B}(E)$ of bounded operators on $E$ fails to be weak*-topologically left Noetherian in the sense of \cite{Wh}. The Banach space $E$ that we use is the dual of one of Wark's non-separable, reflexive Banach spaces with few operators.
\end{abstract}

\maketitle

\section{Introduction}
\noindent
In this note we address the following natural question: given a Banach space $E$, which of its closed linear subspaces $F$ are the kernel of some bounded linear operator $E \rightarrow E$?  We shall begin by showing that if either $E/F$ is separable, or $F$ is separable and  $E$ has the separable complementation property, then $F$ is indeed the kernel of some bounded operator on $E$ (Propositions \ref{2.0} and \ref{2.0a}). 
Our main result is that there exists a reflexive, non-separable Banach space $E$ for which these are the only closed linear subspaces that may be realised as kernels (Theorem \ref{2.1}), and in particular $E$ has a closed linear subspace that cannot be realised as the kernel of a bounded linear operator on $E$ (Corollary \ref{2.3}). The Banach space in question may be taken to be the dual of any reflexive, non-separable Banach space that has few operators, in the sense that every bounded operator on $E$ is the sum of a scalar multiple of the identity and an operator with separable range. Wark has shown that such Banach spaces exist \cite{Wa, Wa2018}.

We now describe how we came to consider this question. Given a Banach space $E$ we write $E'$ for its dual space, and $\B(E)$ for the algebra of bounded linear operators $E \rightarrow E$. We recall that a \emph{dual Banach algebra} is a Banach algebra $A$ which is isomorphically a dual Banach space in such a way that the multiplication on $A$ is separately weak*-continuous; equivalently $A$ has a predual which may be identified with a closed $A$-submodule of $A'$.  When $E$ is a reflexive Banach space, $\B(E)$ is a dual Banach algebra with predual given by $E \widehat{\otimes} E',$ where $\widehat{\otimes}$ denotes the projective tensor product of Banach spaces. We recall the following definition from \cite{Wh}:

\begin{definition}
Let $A$ be a dual Banach algebra. We say that $A$ is \textit{weak*-topologically left Noetherian} if every weak*-closed left ideal $I$ of $A$ is weak*-topologically finitely-generated, i.e. there exists $n \in \N$, and there exist $x_1, \ldots, x_n \in I$ such that 
$$I = \overline{Ax_1 +\C x_1 + \cdots +Ax_n +\C x_n}^{w^*}.$$
\end{definition}

In \cite{Wh} various examples were given of dual Banach algebras which satisfy this condition, but none were found that fail it. Using our main result, we are able to prove in Theorem \ref{2.5.13} of this note that, for any non-separable, reflexive Banach space $E$ with few operators in the above sense, $\B(E')$ is a dual Banach algebra which is not weak*-topologically left Noetherian.

\section{Results}
\noindent
We first show that in many cases closed linear subspaces can be realised as kernels. In particular, for a separable Banach space every closed linear subspace is the kernel of a bounded linear operator. Given a Banach space $E$, and elements $x \in E, \lambda \in E'$, we use the bra-ket notation $\vert x \rangle \langle \lambda \vert$ 
to denote the rank-one operator $y \mapsto \langle y, \lambda \rangle x$.

\begin{proposition}		\label{2.0}
Let $E$ be a Banach space, and let $F$ be a closed linear subspace of $E$ such that $E/F$ is separable. Then there exists $T \in \B(E)$ such that $\ker T = F$.
\end{proposition}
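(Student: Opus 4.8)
The plan is to manufacture $T$ by factoring it through the quotient map. Write $q\colon E\to E/F$ for the canonical quotient, so that $\ker q=F$; it then suffices to produce a bounded \emph{injective} operator $S\colon E/F\to E$, because $T=S\circ q$ will then satisfy $\ker T=F$. The point of this reformulation is that $q$ itself does not map into $E$, so we must post-compose it with an injection of the separable space $E/F$ back into $E$. Two degenerate cases are disposed of at once: if $F=E$ take $T=0$, and if $E$ is finite-dimensional then $F$ is complemented in $E$ and any projection of $E$ onto a complement of $F$ has kernel $F$. So from now on I would assume that $E$ is infinite-dimensional and that $F\neq E$.

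To build $S$ (equivalently $T$) explicitly, I would first exploit the separability of $E/F$ to extract a countable separating family of functionals: applying the Hahn--Banach theorem to the members of a countable dense subset of $E/F$ yields a sequence $(\mu_n)_{n\in\N}$ in the closed unit ball of $(E/F)'$ that separates the points of $E/F$. Setting $\lambda_n=\mu_n\circ q\in E'$, we then have $\|\lambda_n\|\le 1$ for every $n$ and $\bigcap_{n\in\N}\ker\lambda_n=F$. Next, since $E$ is infinite-dimensional it contains a normalised basic sequence $(z_n)_{n\in\N}$ (a classical fact). I would then define $T\in\B(E)$ by $Tx=\sum_{n=1}^{\infty}2^{-n}\langle x,\lambda_n\rangle z_n$, that is, $T=\sum_{n=1}^{\infty}2^{-n}\vert z_n\rangle\langle\lambda_n\vert$ in the notation introduced above; the series converges absolutely with $\|Tx\|\le\|x\|$, so $T$ is a well-defined bounded operator on $E$. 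Since each $\lambda_n$ annihilates $F$ we have $F\subseteq\ker T$, and conversely, if $Tx=0$ then, because $(z_n)_{n\in\N}$ is a basic sequence and hence expansions in its closed linear span are unique, each coefficient $2^{-n}\langle x,\lambda_n\rangle$ must vanish, so $x\in\bigcap_{n\in\N}\ker\lambda_n=F$. Therefore $\ker T=F$, as required.

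The one substantive point in this argument is the use of a \emph{basic} sequence $(z_n)_{n\in\N}$ rather than an arbitrary norm-bounded sequence in $E$: a general sequence would still give $F\subseteq\ker T$, but cancellation among the terms of the series defining $Tx$ could enlarge the kernel, and it is precisely the uniqueness of expansions afforded by a basic sequence that rules this out. Equivalently, the heart of the matter is that $E/F$ admits an injective bounded operator into $E$, which rests on an infinite-dimensional Banach space possessing infinitely many ``independent directions''. Everything else — the convergence estimate for $T$, the reduction to the two degenerate cases, and the existence of a basic sequence in an infinite-dimensional Banach space — is routine, so I do not anticipate any further obstacle.
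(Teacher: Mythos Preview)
Your proof is correct and essentially identical to the paper's: both dispose of the finite-dimensional case, choose a normalised basic sequence $(z_n)$ in $E$, and set $T=\sum_{n}2^{-n}\vert z_n\rangle\langle\lambda_n\vert$ for a suitable sequence $(\lambda_n)$ in the unit ball of $F^\perp$ whose common kernel is $F$. The only cosmetic difference is how $(\lambda_n)$ is produced---you apply Hahn--Banach to a countable dense subset of $E/F$, while the paper invokes weak*-separability of the unit ball of $F^\perp\cong(E/F)'$---but both yield the same separating family and the rest of the argument is word-for-word the same.
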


\begin{proof}
Since $E/F$ is separable, the unit ball of $F^\perp \cong (E/F)'$ is weak*-metrisable, and hence, since it is also compact, it is separable. Therefore we may choose a sequence of functionals $(\lambda_n)$ which is weak*-dense in the unit ball of $F^\perp$. We may assume that $E$ is infinite dimensional, since otherwise the result follows from elementary linear algebra. We may therefore pick a normalised basic sequence $(b_n)$ in $E$. Define $T \in \B(E)$ by
$$T = \sum_{n=1}^\infty 2^{-n} \vert  b_n \rangle \langle \lambda_n \vert.$$
Since each $\lambda_n$ belongs to $F^\perp$, clearly $F \subset \ker T$. Conversely, if $x \in \ker T$ then, since $(b_n)$ is a basic sequence, we must have $\langle x, \lambda_n \rangle = 0$ for all $n \in \N$. Hence 
$$x \in \{ \lambda_n : n \in \N \}_\perp = \left( \overline{ \operatorname{span}}^{w^*} \{ \lambda_n : n \in \N \}\right)_\perp = (F^\perp)_\perp = F,$$
as required.
\end{proof}

A Banach space~$E$ is said to have the 
\emph{separable complementation property} if, for each separable linear subspace~$F$ of~$E$,
there is a separable, complemented linear subspace~$D$ of~$E$ such that $F \subset D$. For such Banach spaces we can show that every separable closed linear subspace is a kernel. By \cite{L} every reflexive Banach space has the separable complementation property, so that the next proposition applies in particular to the duals of Wark's Banach spaces, which we shall use in our main theorems. We refer to \cite{HMVZ} for a survey of more general classes of Banach spaces that enjoy the separable complementation
property.

\begin{proposition}		\label{2.0a}
Let~$E$ be  a Banach space with the separable
complementation property. Then, for every closed, separable linear
subspace~$F$ of~$E$, there exists $T\in \B(E)$ such that $\ker T =
F$.
\end{proposition}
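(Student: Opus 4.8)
The plan is to reduce to Proposition~\ref{2.0} by working inside a separable, complemented subspace containing~$F$, and then to transport the resulting operator back to~$E$ in a way that does not enlarge its kernel.

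First I would invoke the separable complementation property to obtain a separable, complemented closed linear subspace~$D$ of~$E$ with $F \subseteq D$, and fix a bounded projection $P \in \B(E)$ with range~$D$, so that $E = D \oplus \ker P$ as a topological direct sum and $D \cap \ker P = \{0\}$. Since $D$ is separable, so is the quotient $D/F$, and hence Proposition~\ref{2.0}, applied to the Banach space~$D$ in place of~$E$, supplies an operator $S \in \B(D)$ with $\ker S = F$. I would then define $T \in \B(E)$ by $T = \iota \circ S \circ P + (\id_E - P)$, where $\iota \colon D \hookrightarrow E$ denotes the inclusion map. The point to be emphasised is that the naive choice $T = \iota \circ S \circ P$ does \emph{not} work: its kernel equals $F \oplus \ker P$, which is strictly larger than~$F$ unless $P = \id_E$; adding the summand $\id_E - P$ repairs this.

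To verify $\ker T = F$, I would write an arbitrary $x \in E$ as $x = Px + (\id_E - P)x$ with $Px \in D$ and $(\id_E - P)x \in \ker P$; then $Tx = S(Px) + (\id_E - P)x$, where $S(Px) \in D$ and $(\id_E - P)x \in \ker P$. Since $D \cap \ker P = \{0\}$, it follows that $Tx = 0$ if and only if $S(Px) = 0$ and $(\id_E - P)x = 0$, that is, if and only if $x = Px$ and $x \in \ker S = F$; hence $\ker T = F$. I do not anticipate a serious obstacle: the only point requiring care is the choice of the summand $\id_E - P$, whose role is to keep~$T$ injective on the complementary subspace~$\ker P$ so that the kernel is not enlarged beyond~$F$. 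The degenerate cases (for instance $F = D$, or $D$ finite-dimensional, or $D = \{0\}$) need no separate treatment, since they are already covered by Proposition~\ref{2.0}.
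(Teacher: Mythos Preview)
Your proposal is correct and follows essentially the same route as the paper: choose a separable complemented $D\supseteq F$ with projection~$P$, apply Proposition~\ref{2.0} inside~$D$ to get $S\in\B(D)$ with $\ker S=F$, and set $T=\iota S P+(\id_E-P)$. The paper's verification that $\ker T=F$ applies $\id_E-P$ to the equation $Tx=0$ rather than invoking the direct sum decomposition explicitly, but this is the same argument in different words.
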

\begin{proof}
Choose a separable, complemented linear subspace~$D$ of~$E$ such that
$F \subset D$, and let  $P \in \B(E)$ be a projection with
range~$D$. By Proposition \ref{2.0}, we can
find $S \in \B(D)$ such that $\ker S= F$. Then
$$ T\colon\ x\mapsto SPx + x-Px,\quad E\to E, $$
defines a bounded linear operator on~$E$. We shall now complete the
proof by showing that $\ker T = F$.

Indeed, for each $x \in \ker T$ we have $0 = (\operatorname{id}_E-P)Tx
= x- Px$, so that $Px = x$. This implies that $0=Tx = Sx$, and
therefore $x\in F$. Conversely, each $x\in F$ satisfies $Sx=0$ and $Px
= x$, from which it follows that $Tx=0$.  
\end{proof}

We recall some notions from Banach space theory that we shall require. Let $E$ be a Banach space. A \textit{biorthogonal system} in $E$ is a set 
$$\{ (x_\gamma, \lambda_\gamma) : \gamma \in \Gamma \} \subset E \times E',$$
 for some indexing set $\Gamma$,
with the property that
\begin{align*}
\langle x_\alpha, \lambda_\beta \rangle = 
	\begin{cases} 
	1\ &\text{if}\ \alpha=\beta\\ 
	0\ &\text{otherwise}\end{cases}
	\quad (\alpha, \beta \in \Gamma).
\end{align*}
A \textit{Markushevich basis} for a Banach space $E$ is a biorthogonal system $\{ (x_\gamma, \lambda_\gamma) : \gamma \in \Gamma \}$ in $E$ such that $\{\lambda_\gamma : \gamma \in \Gamma \}$ separates the points of $E$ and such that $\overline{\spn} \{x_\gamma : \gamma \in \Gamma \} = E$. For an in-depth discussion of Markushevich  bases see \cite{HMVZ}, in which a Markushevich  basis is referred to as an ``M-basis''.

We now prove a lemma and its corollary which we shall use to prove Corollary \ref{2.3} below.

\begin{lemma}		\label{2.5.8}
Let $E$ be a Banach space containing an uncountable bi\-o\-r\-t\-h\-ogonal system. Then $E$ contains a closed linear subspace $F$ such that both $F$ and $E/F$ are non-separable.
\end{lemma}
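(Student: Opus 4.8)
The plan is to extract an uncountable biorthogonal system of a convenient cardinality, split the index set into two uncountable pieces, and let $F$ be the closed span of the vectors indexed by one piece; the functionals indexed by the *other* piece will witness that the quotient is non-separable, while the functionals indexed by the *same* piece will, after restriction, show that $F$ itself is non-separable.

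More precisely, suppose $\{(x_\gamma,\lambda_\gamma):\gamma\in\Gamma\}$ is an uncountable biorthogonal system in $E$. Passing to a subset, I may assume $|\Gamma|=\aleph_1$, and I partition $\Gamma=\Gamma_0\cup\Gamma_1$ into two disjoint sets each of cardinality $\aleph_1$. Put $F=\overline{\spn}\{x_\gamma:\gamma\in\Gamma_0\}$. Then $\{(x_\gamma,\lambda_\gamma):\gamma\in\Gamma_0\}$ is a biorthogonal system in $F$ (the functionals restrict to $F$, and biorthogonality is inherited), so in particular $\{x_\gamma:\gamma\in\Gamma_0\}$ is an uncountable subset of $F$ with the property that $\|x_\alpha-x_\beta\|\geq 1/\|\lambda_\beta\|>0$ whenever $\alpha\neq\beta$; hence $F$ is non-separable.

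For the quotient, consider the functionals $\lambda_\gamma$ for $\gamma\in\Gamma_1$. Each such $\lambda_\gamma$ annihilates every $x_\delta$ with $\delta\in\Gamma_0$, hence annihilates $F$, so it induces a functional $\widetilde\lambda_\gamma$ on $E/F$. Writing $q\colon E\to E/F$ for the quotient map, the family $\{(q(x_\gamma),\widetilde\lambda_\gamma):\gamma\in\Gamma_1\}$ is again biorthogonal: for $\alpha,\beta\in\Gamma_1$ we have $\langle q(x_\alpha),\widetilde\lambda_\beta\rangle=\langle x_\alpha,\lambda_\beta\rangle=\delta_{\alpha\beta}$. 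In particular the vectors $q(x_\gamma)$ for $\gamma\in\Gamma_1$ are uncountably many and pairwise separated (again by the estimate $\|q(x_\alpha)-q(x_\beta)\|\geq 1/\|\widetilde\lambda_\beta\|$, using $\|\widetilde\lambda_\beta\|\leq\|\lambda_\beta\|$), so $E/F$ is non-separable.

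The only genuine subtlety is the bookkeeping that guarantees both $F$ and $E/F$ are non-separable \emph{simultaneously}: one must be careful that removing the $\Gamma_0$-vectors to form the quotient does not collapse the $\Gamma_1$-part, but this is exactly what biorthogonality prevents, since $\lambda_\beta$ for $\beta\in\Gamma_1$ still sees $x_\beta$ after passing to $E/F$. Everything else is the standard observation that an uncountable biorthogonal system forces non-separability via the $\eps$-separation of its vectors, together with the routine fact that a biorthogonal system in $E$ restricts to one in any closed subspace containing the relevant vectors and descends to one in a quotient by a subspace killed by the relevant functionals. I do not expect any real obstacle beyond this.
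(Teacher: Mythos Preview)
Your approach is the same as the paper's: split the index set of an uncountable biorthogonal system into two uncountable pieces, let $F$ be the closed span of one piece, and use the functionals to separate the vectors in~$F$ and the cosets in~$E/F$. However, there is one genuine gap in your write-up.

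You argue that $\{x_\gamma:\gamma\in\Gamma_0\}$ satisfies $\|x_\alpha-x_\beta\|\geq 1/\|\lambda_\beta\|>0$ for $\alpha\neq\beta$, and conclude ``hence $F$ is non-separable''. That deduction is invalid: an uncountable set of points at pairwise \emph{positive} distance need not be $\eps$-separated for any fixed $\eps>0$, and such a set can certainly live inside a separable space (e.g.\ $[0,1]\subset\mathbb{R}$). The same issue recurs in your quotient argument, where the bound $1/\|\widetilde\lambda_\beta\|$ again depends on~$\beta$. Your final paragraph speaks of ``$\eps$-separation'', but nothing you have written produces a uniform~$\eps$.

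The paper supplies exactly the missing step: before splitting the index set, it passes by pigeonhole to an uncountable $\Gamma_n=\{\gamma\in\Gamma:\|x_\gamma\|,\|\lambda_\gamma\|\leq n\}$, so that all the separation estimates become uniformly $\geq 1/n$. With that one additional sentence your argument is complete and coincides with the paper's proof.
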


\begin{proof}
Let $\left\{ (x_\gamma, \lambda_\gamma) : \gamma \in \Gamma \right\}$ be an uncountable biorthogonal system in $E$.
We can write $\Gamma = \bigcup_{n=1}^\infty \Gamma_n$, where
$$\Gamma_n = \{ \gamma \in \Gamma : \Vert x_\gamma \Vert, \Vert \lambda_\gamma \Vert \leq n \} \quad (n \in \N).$$
Since $\Gamma$ is uncountable, there must exist an $n \in \N$ such that $\Gamma_n$ is uncountable.
 Let $\Delta$ be an uncountable subset of $\Gamma_n$ such that $\Gamma_n \setminus \Delta$ is also uncountable, and set $F = \overline{\spn} \{x_\gamma : \gamma \in \Delta \}$. The subspace $F$ is non-separable since $\{x_\gamma : \gamma \in \Delta \}$ is an uncountable set satisfying 
$$\Vert x_\alpha - x_\beta \Vert \geq \frac{1}{n} \vert \langle x_\alpha - x_\beta, \lambda_\alpha \rangle \vert = \frac{1}{n} \quad 
(\alpha, \beta \in \Delta, \alpha \neq \beta).$$

Let $q \colon E \rightarrow E/F$ denote the quotient map. It is well known that the dual map $q' \colon (E/F)' \rightarrow E'$ is an isometry with image equal to $F^\perp$. For each $\gamma \in \Gamma_n \setminus \Delta$ the functional $\lambda_\gamma$ clearly belongs to $F^\perp$, so that there exists $g_\gamma \in (E/F)'$ such that $q'(g_\gamma) = \lambda_\gamma$, and such that 
$\Vert g_\gamma \Vert = \Vert \lambda_\gamma \Vert$. We now see that $\{ q(x_\gamma) : \gamma \in \Gamma_n \setminus \Delta \}$ is an uncountable 
$1/n$-separated subset of $E/F$ because
\begin{align*}
\Vert q(x_\alpha) - q(x_\beta) \Vert &\geq \frac{1}{n} \vert \langle q(x_\alpha) - q(x_\beta), g_\alpha \rangle \vert
=\frac{1}{n} \vert \langle x_\alpha - x_\beta, q'(g_\alpha) \rangle \vert \\
&= \frac{1}{n} \vert \langle x_\alpha - x_\beta, \lambda_\alpha \rangle \vert = \frac{1}{n} \qquad (\alpha, \beta \in \Gamma_n \setminus \Delta, \ \alpha \neq \beta).
\end{align*}
It follows that $E/F$ is non-separable.
\end{proof}

\begin{corollary}		\label{2.5.7}
Let $E$ be a non-separable, reflexive Banach space. Then $E$ contains a closed linear subspace $F$ such that both $F$ and $E/F$ are non-separable.
\end{corollary}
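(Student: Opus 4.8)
The plan is to reduce this to Lemma~\ref{2.5.8} by exhibiting an uncountable biorthogonal system in~$E$; in fact I would produce an uncountable Markushevich basis for~$E$.

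First, I would observe that every reflexive Banach space is weakly compactly generated: its closed unit ball~$B_E$ is weakly compact by reflexivity, and $\overline{\spn}\, B_E = E$. The Amir--Lindenstrauss theorem (see~\cite{HMVZ}) then provides a Markushevich basis $\{(x_\gamma, \lambda_\gamma) : \gamma \in \Gamma\}$ for~$E$.

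Next, I would check that the index set $\Gamma$ must be uncountable. If it were countable, then $E = \overline{\spn}\{x_\gamma : \gamma \in \Gamma\}$ would be separable, contradicting the hypothesis on~$E$. Since a Markushevich basis is in particular a biorthogonal system, $E$ therefore contains an uncountable biorthogonal system, and Lemma~\ref{2.5.8} yields a closed linear subspace~$F$ of~$E$ such that both $F$ and $E/F$ are non-separable, as required.

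The only substantive ingredient is the structure theory of weakly compactly generated spaces---the existence of a Markushevich basis---and this is the sole place where reflexivity (beyond non-separability) enters. I do not anticipate any real obstacle: once the Amir--Lindenstrauss theorem is invoked the rest is bookkeeping. An alternative route would be to quote directly the known fact that a non-separable weakly compactly generated Banach space supports an uncountable biorthogonal system, but passing through the Amir--Lindenstrauss theorem keeps the argument aligned with the references already in use.
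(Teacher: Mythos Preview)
Your proposal is correct and essentially identical to the paper's proof: the paper invokes \cite[Theorem~5.1]{HMVZ} to obtain a Markushevich basis for~$E$, observes that the index set must be uncountable because $E$ is non-separable, and then applies Lemma~\ref{2.5.8}. Your explicit observation that reflexive spaces are weakly compactly generated (justifying the appeal to the Amir--Lindenstrauss theorem) is the only additional detail, and it is a welcome clarification rather than a departure.
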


\begin{proof}
By \cite[Theorem 5.1]{HMVZ} $E$ has a Markushevich basis 
$\left\{ (x_\gamma, \lambda_\gamma) : \gamma \in \Gamma \right\}$. 
The set $\Gamma$ must be uncountable since $E$ is non-separable and, by the definition of a Markushevich basis, $\overline{\spn}\{x_\gamma : \gamma \in \Gamma \} = E$. Hence the result follows from Lemma \ref{2.5.8}.
\end{proof}

We now move on to discuss our main example.
Building on the work of Shelah and Stepr{\=a}ns~\cite{SS}, Wark constructed in \cite{Wa} a reflexive Banach space $E_W$ with the property that it is non-separable but has few operators in the sense that 
\begin{equation}	\label{eq2.5.2} 
\B(E_W) = \C \, {\rm id}_{E_W} + \mathscr{X}(E_W),
\end{equation}
 where $\mathscr{X}(E_W)$ denotes the ideal of operators on $E_W$ with separable range. Recently Wark gave a second example of such a space \cite{Wa2018} with the additional property that the space is uniformly convex. For the rest of our paper $E_W$ can be taken to be either of these spaces. In particular, the only properties of $E_W$ that we shall make use of are that it is reflexive, non-separable, and satisfies Equation \eqref{eq2.5.2}.
\vskip 2mm

\textit{Remark.}
We briefly outline why the dual Banach algebra $\B(E_W')$ fits into the framework of~\cite{Wh}. A \emph{transfinite basis} for a Banach space~$X$ is a linearly independent family 
$\{ x_\alpha : \alpha < \gamma \}$ of vectors in $X$, where $\gamma$ is some infinite ordinal, such that $X_0  = \operatorname{span}\{ x_\alpha : \alpha<\gamma\}$ is dense in~$X$, and with the property that there is a constant $C\geqslant 1$ such that, for each ordinal $\beta<\gamma$, the linear  map~$P_\beta\colon X_0\to X$ defined by
\[ P_\beta \Bigl( \sum_{\alpha<\gamma} s_\alpha x_\alpha \Bigr) = \sum_{\alpha<\beta} s_\alpha x_\alpha \]
has norm at most~$C$. 


In the notation of \cite{Wa} and \cite{Wa2018} the 
family $\{ e(\alpha) : \alpha < \omega_1 \}$ is a transfinite basis of $E_W$.
 See the proofs of Theorem 2 in
\cite{Wa} or Proposition 8 in \cite{Wa2018}. It is shown in \cite{Ros} 
that Banach spaces with transfinite bases have
the approximation property.
 Since the duals of reflexive Banach spaces with the approximation
property also have this property, $E_W'$ has the approximation property.
It follows from \cite[Corollary 5.6]{Wh} that the algebra of compact operators $\K(E_W')$ is a compliant Banach algebra in the sense of \cite[Definition 5.4]{Wh}. 
Hence $\K(E_W')$, and its multiplier algebra $\B(E_W')$, fit into the framework of that paper. In particular \cite[Theorem 6.3]{Wh} gives a complete description of the weak*-closed left ideals of $\B(E_W')$, although we shall not need this in the sequel.
\vskip 2mm

In what follows we denote the image of a bounded linear operator $T$ by $\im T$.

\begin{proposition}		\label{2.5.6}
Let $F$ be a closed linear subspace of the Banach space $E_W$ with the property that 
$F = \overline{\im T_1+ \cdots + \im T_n}$, for some $n \in \N$, and $T_1, \ldots, T_n \in \B(E_W)$. Then either $F$ or $E_W/F$ is separable.
\end{proposition}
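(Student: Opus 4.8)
The plan is to exploit the structure theorem~\eqref{eq2.5.2} for operators on $E_W$, writing each $T_j = \lambda_j\, \mathrm{id}_{E_W} + S_j$ with $\lambda_j \in \C$ and $S_j \in \mathscr{X}(E_W)$, and then to split into two cases according to whether all the scalars $\lambda_1, \ldots, \lambda_n$ vanish.

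In the first case, suppose $\lambda_1 = \cdots = \lambda_n = 0$, so that $T_j = S_j$ has separable range for each $j$. Then $\im T_1 + \cdots + \im T_n$ is contained in the subspace $\overline{\im S_1} + \cdots + \overline{\im S_n}$, which is separable (a finite sum of separable subspaces is separable, and so is its closure); hence $F$, being the closure of a subset of this space, is separable.

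In the second case, suppose that some $\lambda_j$ is non-zero; after relabelling we may assume $\lambda_1 \neq 0$. Let $q\colon E_W \to E_W/F$ be the quotient map, a norm-decreasing surjection. Since $\im T_1 \subseteq \im T_1 + \cdots + \im T_n \subseteq F = \ker q$, we have $q T_1 = 0$, that is, $\lambda_1 q(x) = -q(S_1 x)$ for every $x \in E_W$, and therefore $q(x) = -\lambda_1^{-1} q(S_1 x)$. Letting $x$ range over $E_W$ shows that $E_W/F = q(E_W) = -\lambda_1^{-1}\, q(\im S_1)$, which is separable because $\im S_1$ is separable and $q$ is continuous. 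Hence $E_W/F$ is separable.

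I do not anticipate a genuine obstacle here: once~\eqref{eq2.5.2} is invoked, the argument is a short computation. The only point that requires a moment's care is the second case, where one must pass to the quotient and read the relation $q T_1 = 0$ as expressing $q$ itself --- and hence all of $E_W/F$ --- as (a scalar multiple of) the composition $q \circ S_1$, whose range is separable.
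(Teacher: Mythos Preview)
Your proof is correct and follows essentially the same approach as the paper: decompose each $T_j$ via \eqref{eq2.5.2}, handle the case of all scalars zero by noting $F$ is then a closure of a sum of separable subspaces, and otherwise use a non-zero scalar to conclude that $E_W/F$ is separable. The only cosmetic difference is that in the second case the paper writes $x = \alpha_1^{-1}(T_1 x - S_1 x) \in F + \overline{\im S_1}$ to deduce $E_W = F + \overline{\im S_1}$ and hence $E_W/F \cong \overline{\im S_1}/(\overline{\im S_1}\cap F)$, whereas you pass directly to the quotient map and read off $q = -\lambda_1^{-1}\, q\circ S_1$; these are the same computation.
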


\begin{proof}
Suppose that $F = \overline{ \im T_1+ \cdots + \im T_n},$ for some $n \in \N,$ and some $T_1, \ldots, T_n \in \B(E_W)$. 
By \eqref{eq2.5.2} there exist $\alpha_1, \ldots, \alpha_n \in \C$ and $S_1, \ldots, S_n \in \mathscr{X}(E_W)$ such that
$$T_i = \alpha_i {\rm id}_{E_W} + S_i \quad (i=1, \ldots, n).$$
If every $\alpha_i$ equals zero, then $F = \overline{\im S_1+ \cdots + \im S_n}$, which is separable.  Otherwise, without loss of generality, we may assume that $\alpha_1 \neq 0$. Let $x \in E_W$. Then $T_1x = \alpha_1 x + S_1x$, implying that
$$x = \frac{1}{\alpha_1} \left(T_1x - S_1x \right) \in F + \overline{\im S_1}.$$
As $x$ was arbitrary, it follows that $E_W = F + \overline{\im S_1}$, so that 
$$E_W/F = \frac{\left(F + \overline{\im S_1} \right)}{F} \cong \frac{\overline{\im S_1}}{\left(\overline{\im S_1} \cap F \right)}.$$
Hence, it follows that $E_W/F$ is separable.
\end{proof}

We can now prove our two theorems.

\begin{theorem}		\label{2.1}
Let $D$ be a closed linear subspace of $E_W'$. Then the following conditions are equivalent:
	\begin{enumerate}
		\item[{\rm (a)}] either $D$ or $E_W'/D$ is separable;
		\item[{\rm (b)}] $D = \ker T$, for some $T \in \B(E_W')$;
		\item[{\rm (c)}] there exist $n \in \N$, and $T_1, \ldots, T_n \in \B(E_W')$ such that 
		                 $D = \bigcap_{i = 1}^n \ker T_i$.
	\end{enumerate}
\end{theorem}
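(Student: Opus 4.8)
The plan is to prove the cyclic chain of implications (a) $\Rightarrow$ (b) $\Rightarrow$ (c) $\Rightarrow$ (a). Of these, (b) $\Rightarrow$ (c) is trivial (take $n = 1$ and $T_1 = T$), so the work is in the other two. For (a) $\Rightarrow$ (b) I would argue by cases. If $E_W'/D$ is separable, apply Proposition~\ref{2.0} with the Banach space taken to be $E_W'$ to obtain $T \in \B(E_W')$ with $\ker T = D$. If instead $D$ is separable, note that $E_W'$ is reflexive, being the dual of the reflexive space $E_W$, so by \cite{L} it has the separable complementation property; Proposition~\ref{2.0a} then supplies $T \in \B(E_W')$ with $\ker T = D$.

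The heart of the matter is (c) $\Rightarrow$ (a). Suppose $D = \bigcap_{i=1}^n \ker T_i$ with $T_1, \dots, T_n \in \B(E_W')$. Because $E_W$ is reflexive, the adjoint of each $T_i$ can be viewed as a bounded operator on $E_W$; write $T_i^\ast \in \B(E_W)$ for it, so that $\langle y, T_i \lambda \rangle = \langle T_i^\ast y, \lambda \rangle$ for all $y \in E_W$ and $\lambda \in E_W'$. Since $E_W$ separates the points of $E_W'$, this identity shows that $\ker T_i$ equals the annihilator in $E_W'$ of $\im T_i^\ast$, and therefore
$$ D \;=\; \bigcap_{i=1}^n (\im T_i^\ast)^\perp \;=\; \bigl( \im T_1^\ast + \cdots + \im T_n^\ast \bigr)^\perp \;=\; F^\perp, \qquad \text{where } F := \overline{\im T_1^\ast + \cdots + \im T_n^\ast}. $$
Now $F$ is a closed linear subspace of $E_W$ of precisely the form treated in Proposition~\ref{2.5.6}, so that proposition tells us that $F$ or $E_W/F$ is separable.

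Finally I would transfer this separability across the natural dualities $E_W'/D = E_W'/F^\perp \cong F'$ (via restriction of functionals) and $D = F^\perp \cong (E_W/F)'$ (via the adjoint of the quotient map $E_W \to E_W/F$, as already used in the proof of Lemma~\ref{2.5.8}). The point is that closed subspaces and quotients of the reflexive space $E_W$ are reflexive, and a separable reflexive Banach space has separable dual: thus if $F$ is separable then $F'$, and hence $E_W'/D$, is separable, while if $E_W/F$ is separable then $(E_W/F)'$, and hence $D$, is separable. Either way (a) holds, closing the cycle. I expect the only thing requiring care to be the adjoint bookkeeping, and in particular the observation that reflexivity of $E_W$ is invoked twice --- once to place each $T_i^\ast$ in $\B(E_W)$ so that Proposition~\ref{2.5.6} applies, and once to carry separability from $F$ (or $E_W/F$) to its dual; no individual step is deep.
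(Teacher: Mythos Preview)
Your proposal is correct and follows essentially the same route as the paper: the same cyclic chain, the same use of Propositions~\ref{2.0} and~\ref{2.0a} for (a) $\Rightarrow$ (b), and for (c) $\Rightarrow$ (a) the same passage via reflexivity to preadjoints $T_i^\ast \in \B(E_W)$ (the paper writes $T_i = S_i'$), the same identification of $D$ with the annihilator of $F = \overline{\sum \im T_i^\ast}$, the same appeal to Proposition~\ref{2.5.6}, and the same dualities $D \cong (E_W/F)'$, $E_W'/D \cong F'$ together with reflexivity to transfer separability. The only cosmetic difference is that the paper starts from $F := D_\perp$ and computes it to be $\overline{\sum \im S_i}$, whereas you define $F$ directly as that closed sum and then observe $D = F^\perp$; these are the same argument read in opposite directions.
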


\begin{proof}
We first prove that (a) implies (b). Indeed, let $D$ be a closed linear subspace of $E_W'$. By \cite{L}, we may apply Proposition \ref{2.0a} to $E_W'$ to see that if $D$ is separable then it may be realised as the kernel of some $T \in \B(E_W')$. If instead $E_W'/D$ is separable, then we may apply Proposition \ref{2.0}.
	
It is trivial that (b) implies (c), so it remains to prove that (c) implies (a). Let $D$ be a closed linear subspace of $E_W'$ that can be written in the given form for some $n \in \N$ and some $T_1, \ldots, T_n \in \B(E_W')$. Set $F = D_\perp$. Since $E_W$ is reflexive, 
there exist $S_1, \ldots, S_n \in \B(E_W)$ such that, for each $i = 1, \ldots, n$, $T_i = S_i'$, the dual operator of $S_i$. It follows that
$$F = D_\perp = \left( \bigcap_{i = 1}^n \ker S_i' \right)_\perp = 
\overline{\im S_1+ \cdots + \im S_n}.$$
Hence, by Proposition \ref{2.5.6}, either $F$ or $E_W/F$ is separable. 
Since $F$ is also reflexive, it now follows from the formulae $(E_W/F)' \cong D$ and $F' \cong E_W'/D$ that either $D$ or $E_W'/D$ is separable.
\end{proof}

\begin{corollary}		\label{2.3}
The Banach space $E_W'$ contains a closed linear subspace $D$ which is not of the form $\bigcap_{i=1}^n \ker T_i$, for any choice of $n \in \N$, and operators $T_1, \ldots, T_n \in \B(E_W')$.
\end{corollary}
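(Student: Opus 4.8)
The plan is to read off the subspace~$D$ directly from the machinery already assembled, namely Theorem~\ref{2.1} together with Corollary~\ref{2.5.7}. By Theorem~\ref{2.1}, a closed linear subspace~$D$ of~$E_W'$ is of the form $\bigcap_{i=1}^n \ker T_i$ (for some $n \in \N$ and $T_1, \ldots, T_n \in \B(E_W')$) if and only if either~$D$ or~$E_W'/D$ is separable. Hence it suffices to exhibit a single closed linear subspace~$D$ of~$E_W'$ for which \emph{both} $D$ and~$E_W'/D$ are non-separable.

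To produce such a~$D$, I would first verify that $E_W'$ satisfies the hypotheses of Corollary~\ref{2.5.7}: it is reflexive because~$E_W$ is, and it is non-separable because a Banach space with separable dual is itself separable, whereas~$E_W$ is not. Corollary~\ref{2.5.7}, applied to~$E_W'$ in place of~$E$, then yields a closed linear subspace~$D$ of~$E_W'$ with both~$D$ and~$E_W'/D$ non-separable. Since condition~(a) of Theorem~\ref{2.1} fails for this~$D$, condition~(c) must fail as well, which is precisely the assertion of the corollary.

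I do not expect a genuine obstacle here: all the substance lies in Theorem~\ref{2.1} and in the Markushevich-basis argument underlying Lemma~\ref{2.5.8} and Corollary~\ref{2.5.7}. The only point that warrants a moment's care is the observation that it is the \emph{dual} space~$E_W'$, not~$E_W$ itself, to which Corollary~\ref{2.5.7} must be applied, and that~$E_W'$ is indeed reflexive and non-separable so that this application is legitimate.
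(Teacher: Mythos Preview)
Your proposal is correct and follows exactly the same route as the paper: apply Corollary~\ref{2.5.7} to~$E_W'$ to obtain a closed subspace~$D$ with both~$D$ and~$E_W'/D$ non-separable, and then invoke Theorem~\ref{2.1}. Your additional remark verifying that~$E_W'$ is reflexive and non-separable is a welcome clarification, but otherwise the arguments are identical.
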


\begin{proof}
By Corollary \ref{2.5.7} $E_W'$ contains a closed linear subspace $D$ such that neither $D$ nor $E_W'/D$ is separable. The result now follows from Theorem \ref{2.1}.
\end{proof}

\begin{theorem} 	\label{2.5.13}
The dual Banach algebra $\B(E_W')$ is not weak*-topologically left Noetherian.
\end{theorem}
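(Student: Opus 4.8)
The plan is to translate the purely Banach-space statement of Corollary \ref{2.3} into a statement about weak*-closed left ideals of $\B(E_W')$. Write $A = \B(E_W')$. Since $E_W$, and hence $E_W'$, is reflexive, $A$ is a dual Banach algebra with predual $E_W'\widehat{\otimes}E_W$, the duality being $\langle T, \xi\otimes a\rangle = \langle T\xi, a\rangle$ for $T\in A$, $\xi\in E_W'$ and $a\in E_W=(E_W')'$. For a closed linear subspace $D$ of $E_W'$ I would consider
$$\mathcal{I}_D = \{ T\in A : D\subseteq\ker T\},$$
and establish three elementary facts: (i) $\mathcal{I}_D$ is a weak*-closed left ideal of $A$; (ii) for any $T_1,\ldots,T_n\in A$, the smallest weak*-closed left ideal of $A$ containing $T_1,\ldots,T_n$ is contained in $\mathcal{I}_{D'}$, where $D'=\bigcap_{i=1}^n\ker T_i$; and (iii) the assignment $D\mapsto\mathcal{I}_D$ is injective on closed linear subspaces of $E_W'$.

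For (i): $\mathcal{I}_D$ is a left ideal because $D\subseteq\ker T$ forces $D\subseteq\ker(ST)$ for every $S\in A$, and it is weak*-closed because $\mathcal{I}_D=\bigcap_{\xi\in D}\bigcap_{a\in E_W}\{T\in A:\langle T\xi,a\rangle=0\}$, with each functional $T\mapsto\langle T\xi,a\rangle$ being evaluation against $\xi\otimes a$ and hence weak*-continuous. For (ii): $\mathcal{I}_{D'}$ is a weak*-closed left ideal containing each $T_i$ (as $D'\subseteq\ker T_i$), so it contains the weak*-closed left ideal they generate. For (iii): it suffices to check $\bigcap_{T\in\mathcal{I}_D}\ker T=D$, the inclusion $\supseteq$ being clear; for $\subseteq$, given $\xi_0\in E_W'\setminus D$, Hahn--Banach together with reflexivity supplies $a\in E_W$ with $a|_D=0$ and $\langle\xi_0,a\rangle=1$, and then for any nonzero $y_0\in E_W'$ the rank-one operator $T=\vert y_0\rangle\langle a\vert$ lies in $\mathcal{I}_D$ but has $T\xi_0=y_0\neq 0$.

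With these in hand the theorem follows quickly. Suppose, for contradiction, that $A$ is weak*-topologically left Noetherian, and let $D$ be the closed linear subspace of $E_W'$ provided by Corollary \ref{2.3}, so that $D$ is \emph{not} of the form $\bigcap_{i=1}^n\ker T_i$ for any $n\in\N$ and $T_1,\ldots,T_n\in A$. Then $\mathcal{I}_D$ is weak*-topologically finitely generated, say by $T_1,\ldots,T_n\in\mathcal{I}_D$; as $A$ is unital, this means $\mathcal{I}_D$ is the smallest weak*-closed left ideal containing $T_1,\ldots,T_n$. Set $D'=\bigcap_{i=1}^n\ker T_i$. By (ii) we have $\mathcal{I}_D\subseteq\mathcal{I}_{D'}$, while each $T_i\in\mathcal{I}_D$ gives $D\subseteq\ker T_i$, hence $D\subseteq D'$ and so $\mathcal{I}_{D'}\subseteq\mathcal{I}_D$. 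Thus $\mathcal{I}_D=\mathcal{I}_{D'}$, and (iii) forces $D=D'=\bigcap_{i=1}^n\ker T_i$, contradicting the choice of $D$.

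The real content of the result sits in Corollary \ref{2.3} (hence in Theorem \ref{2.1} and in Wark's space); what remains here is essentially formal, and the only step demanding genuine care is the identification of the predual of $\B(E_W')$ together with the verification in (i) that $\mathcal{I}_D$ is weak*-closed. A minor point worth flagging is that the $\C x_i$ terms in the definition of weak*-topological generation are redundant in this setting because $\B(E_W')$ is unital.
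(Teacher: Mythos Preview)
Your proof is correct and follows essentially the same strategy as the paper: define $\mathcal{I}_D=\{T:\ker T\supseteq D\}$, show it is a weak*-closed left ideal, and argue that if it were weak*-topologically generated by $T_1,\ldots,T_n$ then $D=\bigcap_i\ker T_i$, contradicting Corollary~\ref{2.3}. The only cosmetic difference is that where the paper verifies $\bigcap_{T\in\mathcal{I}_D}\ker T=\bigcap_i\ker T_i$ by an explicit net computation against elementary tensors, you obtain the same conclusion more abstractly via your fact~(ii) (that $\mathcal{I}_{D'}$ is a weak*-closed left ideal containing the generators) together with injectivity~(iii); this is a mild streamlining but not a different route.
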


\begin{proof}
To simplify notation set $E= E_W'$. Let $D$ be a closed linear subspace of $E$ as in Corollary \ref{2.3} and set 
 $$\mathscr{I} := \left\{T \in \B(E): \ker T \supset D \right\}.$$ 
It is clear that $\mathscr{I}$ is a left ideal of $\B(E)$, and it is weak*-closed since 
$$\mathscr{I}= \left\{ x \otimes \lambda : x \in D, \ \lambda \in E' \right\}^\perp.$$
 We shall show that this ideal fails to be weak*-topologically finitely-generated. Assume towards a contradiction that there exist $n \in \N$ and $T_1, \ldots, T_n \in \B(E)$ such that 
$$\mathscr{I} = \overline{\B(E)T_1+ \cdots + \B(E)T_n}^{w^*}.$$
We show that
\begin{equation*}		\label{eq2.1}
\bigcap_{T \in \mathscr{I}} \ker T = \bigcap_{i=1}^n \ker T_i.
\end{equation*}
Indeed, let $x \in \bigcap_{i=1}^n \ker T_i$, and $S \in \mathscr{I}$. Take a net $(S_\alpha)$ in $\B(E)T_1+ \cdots + \B(E)T_n$ converging to $S$ in the weak*-topology. Then for any $\lambda \in E'$ we have 
$$ 0 = \lim_\alpha \langle  S_\alpha(x), \lambda \rangle  = \lim_\alpha \langle x \otimes \lambda, S_\alpha \rangle = \langle x \otimes \lambda, S \rangle = \langle S(x), \lambda \rangle,$$
and as $\lambda$ was arbitrary it follows that $S(x) = 0$. As $x$ was arbitrary $ \bigcap_{i=1}^n \ker T_i \subset \bigcap_{T \in \mathscr{I}} \ker T$, and the reverse inclusion is trivial.

Observe that $D \subset \bigcap_{T \in \mathscr{I}} \ker T$. Conversely, given $x \in E \setminus D$, we may pick $\lambda \in E'$ such that $\langle x, \lambda \rangle =1$, and $\ker \lambda \supset D$. Then the operator 
$\vert x \rangle \langle \lambda \vert$ belongs to $\mathscr{I}$, but $\vert x \rangle \langle \lambda \vert(x) = x \neq 0$,
so that in fact $D = \bigcap_{T \in \mathscr{I}} \ker T= \bigcap_{i=1}^n \ker T_i$. However this contradicts the choice of $D$.
\end{proof}

This is the only example that we know of a dual Banach algebra which is not weak*-topologically left Noetherian. It would be interesting to know if there are examples of the form $M(G)$, the measure algebra of a locally compact group $G$. In the light of \cite[Corollary~1.6(i)]{Wh} this would be particularly interesting for a compact group $G$. It would also be interesting to know whether the Fourier--Stieltjes algebra of a locally compact group ever fails to be weak*-topologically left Noetherian.

Another interesting problem would be to characterise those closed linear subspaces $F$ of a non-separable Banach space $E$ such that $F$ is the kernel of some bounded linear operator on $E$.

\subsection*{Acknowledgements}
\noindent
The second author is supported by the French
``Investissements d'A\-v\-e\-n\-ir'' program, project ISITE-BFC (contract
ANR-15-IDEX-03). The article is based on part of the second author's PhD thesis, and as such we would like to thank Garth Dales, as well as the thesis examiners Gordon Blower and Tom K\"orner, for their careful reading of earlier versions of this material and their helpful comments. We are grateful to Hugh Wark and Tomasz Kania for some useful email exchanges.

\end{document}